\theoremstyle{plain}
\newtheorem{theorem}{Theorem}
\newtheorem{proposition}[theorem]{Proposition}
\newtheorem{lem}[theorem]{Lemma}
\newtheorem{corollary}[theorem]{Corollary}
\newtheorem{observation}[theorem]{Observation}
\newtheorem*{observation-sn}{Observation}
\theoremstyle{plain} 
\newcommand{\thistheoremname}{}
\newtheorem{genericthm}[theorem]{\thistheoremname}
\theoremstyle{remark}
\theoremstyle{plain}\newtheorem{definition}{Definition}
\newcommand{\cyl}[1]{\operatorname{cr}_{\ensuremath{\circledcirc}}(#1)}
\newcommand{\cicr}[2]{\operatorname{cr}_{\ensuremath{#1\circ}}(#2)}
\newcommand{\bk}[2]{\operatorname{bkcr}_{#1}(#2)}
\newcommand{\dd}{\ensuremath{\mathcal{D}}}
\newcommand{\ee}{\ensuremath{\mathcal{E}}}
\def\ignore#1{{}}
\title{\bf The complexity of computing the cylindrical \\and the
  \(t\)-circle crossing number of a graph}
\author[F. Duque]{Frank Duque}
\address{Instituto de Matem{\'a}ticas\\ Universidad de
  Antioquia. Medell{\'i}n, Colombia 050010}
\email{rodrigo.duque@udea.edu.co}
\author[H. Gonz{\'a}lez-Aguilar]{Hern{\'a}n Gonz{\'a}lez-Aguilar}
\address{Facultad de Ciencias\\ Universidad Aut{\'o}noma de San Luis
  Potos{\'i}. San Luis Potos{\'i}, M{\'e}xico 78290}
\email{hernan@fc.uaslp.mx}
\author[C. Hern{\'a}ndez-V{\'e}lez]{C{\'e}sar Hern{\'a}ndez-V{\'e}lez$^{\ast}$}
\address{Facultad de Ciencias\\ Universidad Aut{\'o}noma de San Luis
  Potos{\'i}. San Luis Potos{\'i}, M{\'e}xico 78290}
\email{cesar.velez@uaslp.mx}
\thanks{$^{\ast}$Partially supported by FAPESP (Proc. 2013/03447-6) and CNPq (Proc. 456792/2014-7)}
\author[J. Lea{\~n}os]{Jes{\'u}s Lea{\~n}os$^{\dag}$}
\address{Unidad Acad{\'e}mica de Matem{\'a}ticas\\ Universidad Aut{\'o}noma de
  Zacatecas. Zacatecas, M{\'e}xico 9800}
\email{jleanos@matematicas.reduaz.mx}
\thanks{$^{\dag}$Partially supported by CONACyT Grant 179867}
\author[C. Medina]{Carolina~Medina$^{\S}$}
\address{Instituto de F{\'i}sica\\ Universidad Aut{\'o}noma de San Luis
  Potos{\'i}. San Luis Potos{\'i},  M{\'e}xico 78290}
\email{cmedina@ifisica.uaslp.mx}
\thanks{$^{\S}$Partially supported by CONACyT Grant 222667}
\begin{document}


\maketitle

\begin{abstract}

A plane drawing of a graph is {\em cylindrical} if there exist two
concentric circles that contain all the vertices of the graph, and no
edge intersects (other than at its endpoints) any of these circles. The
{\em cylindrical crossing number} of a graph \(G\) is the minimum number
of crossings in a cylindrical drawing of \(G\). In his influential
survey on the variants of the definition of the crossing number of a graph,
Schaefer lists the complexity of computing the cylindrical crossing
number of a graph as an open question. In this paper we settle this by
showing that this problem is NP-complete. Moreover, we show an analogous
result for the natural generalization of the cylindrical crossing
number, namely the \(t\)-{\em circle crossing number}.\\

\noindent
{\small \bfseries Keywords:} \keywords{cylindrical crossing number; book crossing
  number; t-circle crossing number}\\ 
{\small \bfseries  Mathematics Subject Classifications 2010:} \subjclass{05C10;
  68R10; 05C85}
\end{abstract}


\section{Introduction}\label{sec:introduction}

This work is motivated by a question posed by Marcus Schaefer in his survey on
the variants of the definition of the crossing number of a
graph. In~\cite{schaefer2014}, Schaefer listed as open the problem of the
complexity of computing the cylindrical crossing number of a graph. We recall
that a {\em cylindrical drawing} of a graph \(G\) is a plane drawing where all
the vertices are in two concentric cycles, and no circle is intersected by the
interior of an edge. The {\em cylindrical crossing number} \(\cyl{G}\) of a
graph \(G\) is the minimum number of crossings in a cylindrical drawing of
\(G\).

The concept of a cylindrical drawing is motivated by a family of graph
drawings of the complete graph \(K_n\), originally conceived by the
British artist Anthony Hill. As narrated in the lively account given
in~\cite{earlyhistory}, Hill's construction produces drawings of \(K_n\)
that are cylindrical, according to the definition above, and have
exactly
\(Z(n):= \frac{1}{4} \big\lfloor \frac{n}{2} \big\rfloor \big\lfloor
\frac{n-1}{2} \big\rfloor \big\lfloor \frac{n-2}{2} \big\rfloor
\big\lfloor \frac{n-3}{2} \big\rfloor\) crossings. It is a long-standing
conjecture that the crossing number of \(K_n\) is \(Z(n)\), for every
\(n\ge 3\)~\cite{hh}. In~\cite{abrego2014}, \'Abrego et al.~proved that
\(\cyl{K_n}=Z(n)\), for every \(n\ge 3\).

Let \(\dd\) be a plane drawing of a graph \(G\). We say that a Jordan
curve \(\rho\) (that is, a simple closed curve) is {\em clean} (with
respect to \(\dd\)) if the interior of no edge intersects \(\rho\). Now
suppose that there are two clean disjoint circles  with respect
to \(\dd\), say \(\rho_1\) and \(\rho_2\),
such that every vertex of \(G\) is in \(\rho_1\cup \rho_2\). Note that
not only concentricity is not assumed, but also it is not required that
the disk bounded by one of these circles contains the other circle. It
is a straightforward exercise in plane topology that there is a
cylindrical drawing \(\dd'\) with the same cellular structure as
\(\dd\); in particular, \(\dd'\) has the same number of crossings as
\(\dd\). Thus for crossing number purposes it is totally valid to adopt
the following definition of a cylindrical drawing.

\begin{definition}[Equivalent definition of cylindrical drawing]
  A plane drawing of a graph \(G\) is {\em cylindrical} if there exists two  
  disjoint clean circles \(\rho_1,\rho_2\) such that every vertex of
  \(G\) is in \(\rho_1\cup \rho_2\).
\end{definition}

The advantage of adopting this definition of a cylindrical drawing is
that it allows us to generalize this notion to an arbitrary number of
circles, as follows. We should mention that the term ``\(t\)-circle
drawing'' has been suggested by \'Eva Czabarka and Marcus Schaefer
(private communication).

\begin{definition}[\(t\)-circle drawing and \(t\)-circle crossing
  number]\label{def:tcircle}
  Let \(t\ge 1\) be an integer. A plane drawing of a graph \(G\) is a
  \(t\)-{\em circle drawing} if there exist \(t\) pairwise disjoint
  clean circles \(\rho_1,\ldots,\rho_t\) such that every vertex of \(G\)
  is in \(\rho_1\cup \cdots\cup \rho_t\). The \(t\)-{\em circle crossing
    number} \(\cicr{t}{G}\) of a graph \(G\) is the minimum number of
  crossings in a \(t\)-circle drawing of \(G\).
\end{definition}

Thus a cylindrical drawing is simply a \(2\)-{circle} drawing. Moreover,
for \(t=1\) there is an immediate connection with \(2\)-page
drawings. We recall that a \(2\)-{\em page drawing} of a graph is a
drawing in which the vertices lie on the \(x\)-axis, and each edge is
contained (except for its endpoints) either in the upper halfplane, or
in the lower halfplane. A straightforward argument shows that a
\(1\)-circle drawing can be transformed into a \(2\)-page drawing with the
same cellular structure.

Thus the \(1\)-circle crossing number of a graph coincides with its
\(2\)-page crossing number, and the \(2\)-circle crossing number of a
graph is the same as its cylindrical crossing number. The \(3\)-circle
crossing number is related to the {\em pair of pants crossing
  number}~\cite{schaefer2014}, but these are different notions, since in
the latter it is required that none of the disks bounded by the circles
contains another circle, and that no edge intersects the interior of any
of these disks.

For the arguments we will use in this paper, it will be useful to relax
the condition that the clean Jordan curves in
Definition~\ref{def:tcircle} need to be circles:

\begin{definition}[\(t\)-curve drawing and \(t\)-curve crossing
  number]\label{def:tcurve}
  Let \(t\ge 1\) be an integer. A plane drawing of a graph \(G\) is a
  \(t\)-{\em curve drawing} if there exist \(t\) pairwise disjoint clean
  Jordan curves \(\rho_1,\ldots,\rho_t\) such that every vertex of \(G\)
  is in \(\rho_1\cup \cdots\cup \rho_t\). The \(t\)-{\em curve crossing
    number} of a graph \(G\) is the minimum number of crossings in a
  \(t\)-curve drawing of \(G\).
\end{definition}

It follows from the Jordan-Sch\"onflies theorem that if \(\dd\) is a
\(t\)-curve drawing of a graph \(G\), then there is a self-homeomorphism
of the plane that takes \(\dd\) to a \(t\)-circle drawing. In
particular, for any graph \(G\), its \(t\)-circle crossing number and
its \(t\)-curve crossing number are the same. Thus the difference
between these notions is rather cosmetic. On the other hand, as we
hinted above, the advantage of dealing with \(t\)-curve drawings instead
of \(t\)-circle drawings is that being able to work with arbitrary
Jordan curves, instead of exclusively with circles, makes our arguments
simpler.

As we mentioned above, our motivation in this work is to settle the
complexity of computing the cylindrical crossing number of a graph,
that is, the complexity of the decision problem {\sc
  CylindricalCrossingNumber}: ``given a graph \(G\) and an integer
\(k\), is \(\cyl{G}\le k\)''?

As it happens, with very little additional effort we can settle the
complexity of the decision problem {\sc \(t\)-curveCrossingNumber}, that
considers a fixed integer \(t\) and asks ``given a graph \(G\) and an
integer \(k\), is the \(t\)-curve crossing number of \(G\) at most
\(k\)?''.

\begin{theorem}\label{thm:maintheorem}
  For each fixed integer \(t\ge 2\), \(t\)-{\sc curveCrossingNumber} is
  NP-complete.
\end{theorem}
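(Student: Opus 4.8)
\medskip
\noindent\textbf{Proof idea.} The plan is to establish membership in NP by a standard planarization certificate, and NP-hardness by a polynomial-time reduction from a known NP-complete crossing-minimization problem, using high-multiplicity ``rigidity gadgets'' to force the combinatorial type of the drawing.

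For membership in NP, first note that there is a $t$-curve drawing of $G$ with at most $\binom{|E(G)|}{2}$ crossings --- place all vertices on one circle, route every edge along the same side of it (so any two edges cross at most once), and take the remaining $t-1$ curves to be tiny circles inside a common face. Hence $\cicr{t}{G}\le\binom{|E(G)|}{2}$, and we may assume the target $k$ is polynomially bounded. A ``yes'' instance then admits a $t$-curve drawing with at most $k$ crossings whose planarization (the plane graph obtained by turning each crossing into a degree-$4$ vertex) has at most $|V(G)|+k$ vertices. A polynomial certificate consists of the rotation system of this planarization, an assignment of each vertex of $G$ to one of the $t$ curves, and, for each curve, a cyclic word recording which faces and edges of the planarization it traverses between consecutive assigned vertices. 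One checks in polynomial time that the planarization is planar, that the encoded curves are simple, pairwise disjoint, clean, and cover $V(G)$, and that the number of crossing-vertices is at most $k$.

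For NP-hardness I would reduce from a suitable known-hard member of this family --- natural candidates being the book (or, equivalently by the discussion above, the $1$-curve / $2$-page) crossing number, a two-layer crossing number with one side's order fixed, or the ordinary crossing number in the spirit of Garey--Johnson. From an instance $I$ of that problem $\Pi$ one builds a graph $G$ by (a) replacing each ``essential'' edge of $I$ with a bundle of $N$ internally disjoint paths, for a suitably large polynomial $N$, so that each bundle behaves as a single indivisible ``thick edge''; and (b) attaching a rigid ``frame'' subgraph forcing, in every $t$-curve drawing of $G$ with fewer than (say) $N^{2}/2$ crossings, a prescribed cyclic placement of the essential vertices on a prescribed curve, and forcing every curve other than the (at most two) carrying the frame to contain no vertex. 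The only freedom left in such a drawing --- on which side of the frame curve each thick edge runs, and the relative nesting of thick edges on each side --- is exactly the freedom present in $\Pi$, and a counting argument gives $\cicr{t}{G}=N^{2}\cdot\mathrm{opt}(I)+c$ for a computable constant $c$, so the threshold transfers. Since the frame is designed to force at most two nontrivial curves no matter how many are available --- equivalently, the constructed graphs satisfy $\cicr{t}{G}=\cicr{2}{G}$ for every $t\ge2$ --- one construction settles all fixed $t\ge2$ at once; if a uniform frame is inconvenient, one instead appends $t-2$ disjoint copies of a component each of which, within the crossing budget, must occupy an entire curve by itself.

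I expect the technical heart, and the main obstacle, to be the rigidity analysis of step (b): one must show that any $t$-curve drawing of $G$ staying under the budget can be normalized, without increasing crossings, to the intended form (frame drawn canonically, no curve beyond the frame's used, every bundle unsplit and routed entirely on one side of the frame curve), and then evaluate the crossing count of a normalized drawing exactly. Ruling out the ``cheating'' drawings --- splitting a bundle between two regions, spreading the frame over several curves, or exploiting the nesting of the $\rho_{i}$ in an unanticipated way --- is where the large multiplicity $N$ is used: each such deviation is shown to create $\Omega(N)$ forced crossings, typically via a parity or Jordan-curve argument applied to one of the clean curves (across which bundles cannot pass), after which an exchange argument drives an arbitrary cheap drawing to a normalized one. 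Designing the frame to be simultaneously rigid enough to pin the structure and cheap enough not to dominate the count, and keeping the bookkeeping constant $c$ computable, is exactly what makes the statement nontrivial.
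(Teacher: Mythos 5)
Your NP-membership argument is sound and essentially matches the paper's: both encode the planarized drawing combinatorially together with the $t$ clean curves (the paper adds them as $t$ ``blue'' cycles to the drawing, you record them as cyclic face/edge traversal words), and both observe the certificate has size polynomial in $|V|+|E|+k+t$. Your preliminary bound $\cicr{t}{G}\le\binom{|E(G)|}{2}$ is a harmless extra normalization.

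The hardness half, however, is a plan rather than a proof, and the part you defer is precisely the mathematical content of the theorem. Two concrete gaps. First, your reduction rests on a ``rigid frame'' that, in any cheap $t$-curve drawing, pins the cyclic order of the essential vertices on one curve \emph{and} prevents the drawing from exploiting the other $t-1$ curves; you never construct such a frame, and your fallback device --- appending $t-2$ components each of which ``must occupy an entire curve by itself'' --- presupposes exactly the kind of gadget whose existence is nonobvious. The paper's Lemma~\ref{lem:tri} is that gadget: a $3$-connected planar triangulation $G_t$ that admits a $t$-curve embedding but no $(t-1)$-curve embedding. Its existence is not routine; it relies on Proposition~\ref{pro:factor} (in a $t$-curve embedding of a maximal planar graph, the vertices on each curve span a cycle) combined with the Chen--Yu triangulations whose longest cycle has only $O(n^{\log_3 2})$ vertices, so that no $t-1$ cycles can cover all vertices. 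Nothing in your proposal supplies an analogue of this, and without it there is no control over how an adversarial drawing distributes vertices among the $t$ curves. Second, the thick-edge/counting machinery you import from Garey--Johnson-style reductions is unnecessary here and creates the very ``rigidity analysis'' you identify as the main obstacle: the paper instead reduces from the purely topological problem of testing pagenumber $2$ (Chung--Leighton--Rosenberg), uses $k$ disjoint copies of $K_{3,3}$ to absorb the crossing budget exactly (each copy forces exactly one crossing), and then argues via a short face-by-face normalization (the Claim) that at most one clean curve can enter any triangular face of the embedded $G_t$ --- else one could merge curves and contradict the minimality of $G_t$. Hence the disjoint copy of $G$, living in a single face of $G_t$, has all its vertices on one clean curve, i.e., a $2$-page embedding. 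Until you either construct your frame and carry out the $\Omega(N)$ deviation bounds, or substitute a minimal-$t$-curve-embeddable gadget of the Lemma~\ref{lem:tri} type, the hardness direction is not established.
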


As we mentioned above, the \(t\)-circle crossing number of a graph and
its \(t\)-curve crossing number are the same. Thus this settles the
complexity of the decision problem {\sc \(t\)-circleCrossingNumber},
that considers a fixed integer \(t\) and asks ``given a graph \(G\) and
an integer \(k\), is \(\cicr{t}{G}\le k\)?''. Since the \(2\)-circle
crossing number of a graph is its cylindrical crossing number, this
settles in particular the complexity of computing the cylindrical
crossing number. For completeness, we state these observations formally:

\begin{corollary}\label{cor:cylindrical}
  For each fixed integer \(t\ge 2\), \(t\)-{\sc circleCrossingNumber} is
  NP-complete. In particular, {\sc CylindricalCrossingNumber} is
  NP-complete.
\end{corollary}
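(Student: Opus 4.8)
The plan is to prove that the decision problem lies in NP and is NP-hard. Membership is the easy half. First I would note that for every graph $G$ with $m$ edges and every $t\ge 1$ one has $\cicr{t}{G}<\binom{m}{2}$: placing all vertices of $G$ on a single circle $\rho$ and routing each edge inside the disk bounded by $\rho$ produces a drawing in which any two edges cross at most once, and adjoining $t-1$ tiny empty circles in distinct faces turns it into a $t$-curve drawing. Hence we may assume $k<\binom{m}{2}$, so $k$ is polynomially bounded, and a polynomial-size certificate for ``$\cicr{t}{G}\le k$'' becomes available: the planarization $\mathcal P$ of a witnessing drawing (each crossing replaced by a degree-$4$ vertex), presented by a rotation system, together with, for each of the $t$ clean curves, the cyclic list of vertices of $G$ it passes through and, between consecutive such vertices, the face of $\mathcal P$ carrying the connecting arc. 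Since $\mathcal P$ has at most $n+k$ vertices this is polynomial, and one checks in polynomial time that $\mathcal P$ is a planarization of $G$ with at most $k$ crossings, that each curve is simple, closed, and crosses no edge, that the curves are pairwise disjoint, and that every vertex lies on one of them.

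For NP-hardness I would reduce from a crossing-minimization problem of ``linear'' type that is known to be NP-hard — for instance the $2$-page (book) crossing number, which, as noted above, coincides with the $1$-circle crossing number (or, with minor changes to the gadget below, the $1$-page/convex crossing number). Given an instance $(G,k)$ of that problem, the plan is to construct in polynomial time a graph $G'$ and a bound $k'$ with $\cicr{t}{G'}\le k'$ if and only if $G$ admits a $2$-page drawing with at most $k$ crossings. The graph $G'$ will be $G$ together with a large \emph{rigid frame} $F$ and an interface joining the two. The frame consists of $t-1$ very large nested cycles, consecutive ones linked by enough rungs and each one stiffened by a dense local gadget; it has a canonical $t$-curve drawing in which the $t-1$ cycles are realized as $t-1$ of the circles, the $t$-th circle being a small empty circle placed inside the innermost cycle, with a fixed number $c_F$ of crossings. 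The interface attaches $G$ to the innermost cycle of $F$ so that, whenever $G'$ is drawn with few enough crossings, the frame must appear in canonical position, every vertex of $G$ must lie on the $t$-th circle, and every edge of $G$ must be confined to the disk bounded by the innermost cycle of $F$; that disk is split in two by $G$'s own circle, so the trace of the drawing on $G$ is exactly a $2$-page drawing of $G$.

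The reduction would then be finished by the expected bookkeeping. Set $k'=c_F+(\text{fixed interface cost})+k$. Any $2$-page drawing of $G$ with at most $k$ crossings can be glued into the canonical drawing of $F$ to give a $t$-curve drawing of $G'$ with at most $k'$ crossings. Conversely, a $t$-curve drawing of $G'$ with at most $k'$ crossings is cheap enough to force the frame into canonical position and $G$ onto the $t$-th circle, so its crossings split as $c_F$, plus the fixed interface cost, plus the crossings of the induced $2$-page drawing of $G$; the last number is therefore at most $k$. This settles NP-hardness for every fixed $t\ge 2$ — in the base case $t=2$ the frame degenerates to a single large cycle — and together with the first part yields Theorem~\ref{thm:maintheorem}.

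The step I expect to be the crux is proving that a drawing of $G'$ with fewer than $c_F+(\text{interface cost})+\binom{m}{2}$ crossings really does force the canonical picture: one must rule out every alternative — splitting the cycles of $F$ across several circles, routing parts of $G$ or of $F$ through the annular regions between circles, or altering the nesting pattern of the $t$ curves — by showing that each such deviation forces a number of additional crossings which, upon choosing $F$ large enough (polynomially in $|G|$), exceeds $\binom{m}{2}$, so that the stiffened frame plus interface has an essentially unique cheap $t$-curve drawing. The supporting normalization argument amounts to a careful analysis of how pairwise disjoint clean Jordan curves can be positioned relative to a dense, highly connected subdrawing, and this is where the frame must be engineered with some care.
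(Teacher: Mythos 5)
Your NP-membership argument is essentially the paper's (a combinatorial encoding of a witnessing drawing with the $t$ clean curves threaded in as extra cycles, after noting the answer is trivially ``yes'' once $k\ge\binom{m}{2}$), and the corollary itself is, in the paper, a one-line consequence of the $t$-curve theorem since the $t$-circle and $t$-curve crossing numbers coincide. The problem is the NP-hardness half, and the gap sits exactly where you flag it: the claim that a cheap drawing of $G'$ forces the frame into canonical position is not a technicality to be discharged by ``expected bookkeeping'' --- it is the entire content of the theorem, and the frame as you describe it does not have the property you need. Specifically, you need the frame to be impossible to realize cheaply on fewer than $t-1$ clean curves; otherwise a drawing of $G'$ could spend only $t-2$ curves on the frame and give \emph{two} curves to $G$, in which case the induced drawing of $G$ is cylindrical rather than $2$-page and the reduction collapses. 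Nested cycles joined by rungs are planar and do not force this: disjoint clean curves can cover all their vertices using far fewer than $t-1$ curves (a single Jordan curve can visit the vertices of many nested cycles if it is allowed to weave through faces), and ``stiffened by a dense local gadget'' is not a construction. The paper's Lemma~\ref{lem:tri} exists precisely to solve this problem: it builds a $3$-connected plane triangulation $G_t$ from the Chen--Yu triangulations, whose longest cycle has only $O(n^{\log_3 2})$ vertices, so that $t-1$ disjoint cycles cannot cover all vertices and hence no $(t-1)$-curve embedding exists, while a $t$-curve embedding does; $3$-connectivity pins down the embedding, and a face-by-face argument (the Claim in Section~\ref{sec:proofmain}) shows each triangular face of $G_t$ meets at most one clean curve, so the copy of $G$ drawn inside a face inherits exactly one curve. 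You would have to prove an analogue of all of this for your frame, and nothing in the proposal does so.

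A second, independent difficulty is that you reduce from the $2$-page \emph{crossing number} with a nonzero budget $k$, which forces you to prove a quantitative rigidity statement (every deviation from the canonical picture costs more than $\binom{m}{2}$ extra crossings) and to rule out trade-offs in which the frame absorbs extra crossings so that $G$ or the interface can save some. The paper sidesteps this entirely by reducing from $2$-page \emph{embeddability} (the $k=0$ case, NP-complete by Chung--Leighton--Rosenberg) and padding with $k$ disjoint copies of $K_{3,3}$: each copy must contribute exactly one crossing in any $t$-curve drawing, so a drawing of $G'$ with at most $k$ crossings forces the subdrawing of $G\cup G_t$ to be an \emph{embedding}, and all accounting disappears. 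Unless you can supply the rigidity analysis for your frame, I would recommend adopting that design: take $G'$ to be the disjoint union of $G$, the graph $G_t$ of Lemma~\ref{lem:tri}, and $k$ copies of $K_{3,3}$, and then invoke the face argument to conclude that $G$ lies on a single clean curve, i.e.\ has a $2$-page embedding.
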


Before proceeding to the proof of Theorem~\ref{thm:maintheorem}
(Section~\ref{sec:proofmain}), we establish in the next section a result
on plane triangulations that are minimal with respect to having a
$t$-curve embedding.


\section{Minimal \texorpdfstring{\(t\)}{t}-curve embeddings}\label{sec:aux}

An essential ingredient in the proof that \(t\)-{\sc
  curveCrossingNumber} is NP-hard is the existence of plane
triangulations that are minimal with respect to having a \(t\)-curve
embedding. Our aim in this section is to establish this result
(Lemma~\ref{lem:tri} below). We will need the following statement.

\begin{proposition}\label{pro:factor}
  Let \(G\) be a maximal planar graph, and let \(t\) be a positive
  integer. Suppose that \(G\) has a \(t\)-curve embedding. Then there is
  a collection \(\{H_1,\ldots,H_t\}\) of pairwise disjoint subgraphs
  of \(G\) with the following properties: (i) if \(H_i\) has at least
  \(3\) vertices for some \(i\in\{1,\ldots,t\}\), then \(H_i\) is a
  cycle; and (ii) \(\bigcup_{i=1}^t H_i\) contains all the vertices of
  \(G\).
\end{proposition}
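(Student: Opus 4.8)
The plan is to extract the collection $\{H_1,\ldots,H_t\}$ directly from the $t$-curve embedding. Fix a $t$-curve embedding $\dd$ of $G$ with clean Jordan curves $\rho_1,\ldots,\rho_t$. For each $i$, let $V_i$ be the set of vertices of $G$ that lie on $\rho_i$; after slightly perturbing we may assume the $V_i$ partition $V(G)$ (a vertex lying on more than one curve is impossible since the curves are disjoint, so this is automatic). Let $H_i$ be the subgraph of $G$ induced by those edges of $G$ whose drawing in $\dd$ is contained, except for its endpoints, in the curve $\rho_i$ together with its two incident vertices — equivalently, the edges of $G$ that are drawn \emph{along} $\rho_i$. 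Since $\rho_i$ is clean, any edge is either disjoint from $\rho_i$ in its interior or is drawn along an arc of $\rho_i$; so $H_i$ is exactly the set of edges drawn as sub-arcs of $\rho_i$, and its vertex set is contained in $V_i$. The $H_i$ are pairwise disjoint because the $\rho_i$ are. Property (ii) will need a separate argument (see below), so the first real content is property (i).

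For property (i), suppose $H_i$ has at least $3$ vertices; I claim $H_i$ is a cycle. First, $H_i$ is a subgraph of the circle $\rho_i$, drawn without crossings, so each component of $H_i$ is either an isolated vertex, a path, or a cycle, and $H_i$ contains at most one cycle (two disjoint sub-arcs-that-are-cycles cannot both equal sub-curves of a single Jordan curve unless one is the whole curve). The key point is to rule out the possibility that $H_i$ is a proper path (or a disjoint union of paths/isolated vertices), i.e. that $\rho_i$ is not entirely covered by edges of $G$. Here is where maximality of $G$ enters: if some arc $\alpha$ of $\rho_i$ between two consecutive vertices $u,v\in V_i$ is \emph{not} covered by an edge of $G$, then $u$ and $v$ are not adjacent \emph{along that side}, but since $G$ is a maximal planar graph (a triangulation), the face of $\dd$ (which is an embedding — wait, $G$ has a $t$-curve \emph{embedding}, so $\dd$ is crossing-free) lying just off $\alpha$ is bounded by a triangle, and I can use the triangulation structure to force an edge onto $\alpha$ after an ambient homeomorphism, or more cleanly, to show $H_i$ must already be a spanning cycle of $V_i$. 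The cleanest route: since $\dd$ is an embedding of a triangulation, every face is a triangle; consider the cyclic sequence $v_1,\ldots,v_m$ of vertices of $V_i$ around $\rho_i$; the arc between $v_j$ and $v_{j+1}$ bounds (on at least one side) a region which, being subdivided into triangles with all vertices on $V_i\subseteq\rho_i$, forces $v_jv_{j+1}\in E(G)$ drawable along $\rho_i$. Iterating, all $m$ consecutive pairs are edges, so $H_i$ contains the cycle $v_1v_2\cdots v_m v_1$; since $H_i\subseteq\rho_i$ this cycle is all of $H_i$.

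Property (ii) then follows: once each $H_i$ with $|V(H_i)|\ge 3$ is the full cycle on $V_i$, and the small cases ($|V_i|\le 2$) trivially have $H_i$ containing all of $V_i$ (a single vertex or a single edge), we get $\bigcup_i V(H_i)=\bigcup_i V_i=V(G)$.

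The main obstacle I anticipate is property (i) — specifically, the argument that the edges drawn along $\rho_i$ actually \emph{cover} $\rho_i$ rather than leaving gaps, so that $H_i$ closes up into a cycle rather than being a union of paths. Pushing an edge onto an uncovered arc requires care: one must verify that in the triangulation, the triangle incident to that arc can be re-drawn (via a homeomorphism of the plane fixing $\rho_i$ setwise and the rest of the drawing) so that one of its sides runs along the arc, without creating crossings or moving other vertices off their curves. I would handle this by an innermost/outermost argument on the regions cut off by $\rho_i$, combined with the fact that a maximal planar graph on $\ge 4$ vertices is $3$-connected, which pins down the face structure enough to make the rerouting canonical.
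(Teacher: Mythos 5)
There is a genuine gap, and it starts with a misreading of the definition of \emph{clean}. You write that ``any edge is either disjoint from $\rho_i$ in its interior or is drawn along an arc of $\rho_i$,'' but a clean curve is one whose intersection with the interior of \emph{every} edge is empty; an edge drawn along a sub-arc of $\rho_i$ would violate cleanliness. Consequently your $H_i$, defined as the set of edges drawn along $\rho_i$, has no edges at all, and the entire structure you try to build on it (``$H_i$ is a subgraph of the circle $\rho_i$,'' ``since $H_i\subseteq\rho_i$ this cycle is all of $H_i$'') collapses. The difficulty you flag as the ``main obstacle'' --- rerouting the triangulation by an ambient homeomorphism so that edges come to lie on the uncovered arcs of $\rho_i$ --- is a problem you created for yourself and never actually solve; the innermost/outermost argument you gesture at is not carried out, and it is not needed. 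The proposition only asks for abstract subgraphs of $G$ that are pairwise disjoint, cover $V(G)$, and are cycles when they have at least three vertices; nothing requires them to be drawn on the curves.

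The correct route, which is the one the paper takes and which is already half-present in your third paragraph, is to define $H_i$ combinatorially: list the vertices $v_1,\ldots,v_{m_i}$ on $\rho_i$ in cyclic order and let $H_i$ be the cycle $v_1v_2\cdots v_{m_i}v_1$ (or a single vertex/edge when $m_i\le 2$, or the null graph when $m_i=0$). The only thing to prove is that consecutive vertices are adjacent, and the argument is shorter and cleaner than what you sketch: the open arc of $\rho_i$ between $v_j$ and $v_{j+1}$ contains no vertex (they are consecutive) and meets no edge interior (cleanliness), so it lies in a single face of the embedding; that face is therefore incident with both $v_j$ and $v_{j+1}$, and since $G$ is maximal planar every face is a triangle, so $v_j$ and $v_{j+1}$ are adjacent. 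Your version of this step --- that the arc ``bounds a region which, being subdivided into triangles with all vertices on $V_i$, forces $v_jv_{j+1}\in E(G)$'' --- is not right as stated: an arc does not bound a region, the region you seem to have in mind presupposes the edge $v_jv_{j+1}$ whose existence is what you are trying to establish, and the claim that all vertices of that region lie on $V_i$ is unjustified. With the face-incidence argument in place, disjointness of the $H_i$ and the covering property (ii) are immediate, with no appeal to $3$-connectivity or to uniqueness-of-embedding beyond the fact that the given embedding is a triangulation.
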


\begin{proof}
  Let \(\ee\) be a \(t\)-curve embedding of \(G\), and let
  \(\rho_1,\ldots,\rho_t\) be the underlying $t$ clean Jordan curves of
  \(\ee\). Let \(i\in\{1,\ldots,t\}\). If \(\rho_i\) does not contain
  any vertex, then we let \(H_i\) be the null graph. If \(\rho_i\)
  contains at least one vertex, let \(v_1,\ldots,v_{m_i}\) be the
  vertices on \(\rho_i\), in the (cyclic) order in which
  they appear in \(\rho_i\). If \(m_i=1\), then we let \(H_i\) be the
  subgraph of \(G\) that consists only of the vertex \(v_1\). If
  \(m_i\ge 2\), we proceed as follows.

  For \(j=1,\ldots,m_i\), there is a subarc of \(\rho_i\) whose endpoints are
  \(v_j\) and \(v_{j+1}\) (indices are taken modulo \(m_i\)), and that is
  otherwise disjoint from \(G\). This implies that for \(j=1,\ldots,m_i\), there
  is a face incident with \(v_j\) and \(v_{j+1}\). Since \(G\) is maximal
  planar, \(\ee\) is a plane triangulation and it is the unique plane embedding
  of \(G\) (up to homeomorphism). Therefore the existence of a face incident
  with \(v_j\) and \(v_{j+1}\) implies that \(v_j\) and \(v_{j+1}\) are
  adjacent.

  If \(m_i=2\), then we let \(H_i\) be the subgraph of \(G\) that consists
  of the vertices \(v_1\) and \(v_2\), and the edge joining them. If
  \(m_i\ge 3\), then \(v_1v_2\ldots v_{m_i} v_1\) is a cycle \(C_i\) of \(G\),
  and we let \(H_i=C_i\).

  Since each vertex of \(G\) is contained in a curve in
  \(\{\rho_1,\ldots,\rho_t\}\), and these curves are pairwise
  disjoint, it follows that the collection \(\{H_1,\ldots,H_t\}\)
  satisfies the required conditions.
\end{proof}

\begin{lem}\label{lem:tri}
  For every \(t \ge 2\) there is a \(3\)-connected simple graph \(G_t\)
  such that (i) \(G_t\) triangulates the plane; (ii) \({G_t}\) has a
  \(t\)-curve embedding; and (iii) \(G_t\) has no \((t-1)\)-curve embedding.
\end{lem}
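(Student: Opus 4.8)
The plan is to build $G_t$ explicitly as a triangulation assembled from $t$ nested "rings," each of which is forced onto a distinct curve, and then use Proposition~\ref{pro:factor} to rule out a $(t-1)$-curve embedding. Concretely, I would take $t$ disjoint cycles $C_1,\dots,C_t$, each of length say $\ell$ (for some fixed $\ell\ge 3$ to be chosen), nest them, and triangulate each annular region between $C_j$ and $C_{j+1}$ in the standard way (a ``prism''-like triangulation of the annulus using only the $2\ell$ vertices on its two boundary cycles); finally cap off the innermost disk bounded by $C_1$ and the outer face bounded by $C_t$ by coning each to a single new apex vertex $a$ and $b$ respectively. The resulting graph $G_t$ is a simple $3$-connected plane triangulation (provided $\ell$ is large enough that no multi-edges or separating triangles of the wrong type appear — $\ell\ge 4$ or $5$ should suffice, and I would verify $3$-connectivity directly from the construction). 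It visibly has a $t$-curve embedding: place $C_1,\dots,C_t$ on $t$ concentric circles $\rho_1,\dots,\rho_t$ (with $a$ and $b$ tucked just inside $\rho_1$ and just outside $\rho_t$ — wait, that's a problem since $a,b$ must also lie on curves), so instead I would either put $a$ on $\rho_1$ and $b$ on $\rho_t$ by a slight perturbation, or better, use $t$ curves that are not circles but Jordan curves passing through the appropriate vertices, which is legitimate by Definition~\ref{def:tcurve}. That gives (i) and (ii).

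The heart of the argument is (iii): $G_t$ has no $(t-1)$-curve embedding. Here I would argue by contradiction. Suppose $G_t$ has a $(t-1)$-curve embedding. Since $G_t$ is maximal planar, Proposition~\ref{pro:factor} yields pairwise disjoint subgraphs $H_1,\dots,H_{t-1}$ covering all vertices of $G_t$, each being either a single vertex, a single edge, or a cycle. Now I would exploit the combinatorial structure: the vertex set of $G_t$ is $\{a,b\}\cup V(C_1)\cup\cdots\cup V(C_t)$, and these $t+2$ ``layers'' are glued in a linear order. The key observation is that any cycle in $G_t$ that uses vertices from two non-adjacent layers would have to ``jump across'' an intermediate layer, which is impossible because the intermediate layer separates the graph — more precisely, $V(C_j)$ is a separator whose removal disconnects the layers below from those above. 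Therefore each $H_i$ — being connected — lies within the union of two consecutive layers, and in fact (since it is a cycle, edge, or vertex and the only cycles within two consecutive layers are rather constrained) each $H_i$ can cover at most the vertices of one layer plus possibly a few vertices of an adjacent one. A careful counting argument, choosing $\ell$ large, then shows that $t-1$ such subgraphs cannot cover all $t+2$ layers: each $H_i$ contributes ``at most one layer's worth'' of coverage in the relevant sense, so we need at least $t$ of them. This contradiction establishes (iii).

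The main obstacle I anticipate is making precise the claim that ``$t-1$ cycles/edges/vertices cannot cover $t$ (or $t+2$) nested layers.'' The subtlety is that a single cycle $H_i$ could in principle weave between two adjacent layers $C_j$ and $C_{j+1}$ and thereby contain vertices from both; if it could contain \emph{all} of $V(C_j)\cup V(C_{j+1})$ then one $H_i$ would cover two layers and the counting collapses. To handle this I would choose $\ell$ large enough and triangulate the annuli so that no single cycle of $G_t$ contains all vertices of two consecutive cycles $C_j, C_{j+1}$ — for instance, by ensuring the annular triangulation has few ``vertical'' edges, or by a parity/length obstruction: a cycle covering $V(C_j)\cup V(C_{j+1})$ would have length $2\ell$ but be forced to use at least $\ell$ of the ``rungs'' between the two layers, and the structure of the prism triangulation forbids a Hamiltonian cycle on the $2\ell$ boundary vertices of the annulus. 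Alternatively — and this is probably cleaner — I would design the gadget so that between consecutive layers there is a ``buffer'' of low-degree vertices (e.g., insert a subdivided structure) guaranteeing that any cycle is confined to essentially one layer. Getting this separation/non-coverage lemma exactly right, with an explicit construction and a clean proof, is where the real work lies; everything else (planarity, $3$-connectedness, the existence of the $t$-curve embedding, and the invocation of Proposition~\ref{pro:factor}) is routine.
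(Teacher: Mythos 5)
Your overall strategy for (iii) --- invoke Proposition~\ref{pro:factor} and show that $t-1$ pairwise disjoint vertices/edges/cycles cannot cover $V(G_t)$ --- is exactly the right reduction, and it is the one the paper uses. But the construction you propose does not have the property you need, and the obstacle you flag at the end is not a technicality to be patched: it is fatal. A stack of $t$ nested cycles with triangulated annuli between them and two apex vertices admits a boustrophedon Hamiltonian cycle (traverse $C_1$ almost fully, cross a rung to $C_2$, traverse it back the other way, cross to $C_3$, and so on, splicing $a$ and $b$ in between two consecutive neighbours on $C_1$ and $C_t$); indeed for $\ell\ge 4$ such graphs are typically $4$-connected and hence Hamiltonian by Tutte's theorem no matter how you triangulate the annuli. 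A Hamiltonian planar triangulation has a $2$-page book embedding (Bernhart--Kainen), i.e.\ a $1$-curve embedding, so your $G_t$ satisfies \emph{neither} the conclusion of the counting argument nor (iii) itself, for any $t\ge 2$. The suggested repairs (fewer vertical edges, ``buffer'' vertices, subdividing) do not reach the bound you actually need, namely that \emph{every} cycle of $G_t$ has fewer than $|V(G_t)|/(t-1)$ vertices --- merely forbidding one cycle from covering two full layers still allows $t-1$ long spiralling cycles to cover everything once $t\ge 3$.

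What is genuinely needed, and what the paper supplies, is a family of $3$-connected plane triangulations whose \emph{longest} cycle is sublinear in the number of vertices: the iterated stacked triangulations $T_1,T_2,\ldots$ of Chen and Yu (put a new vertex in every inner face at each step), for which the longest cycle has length less than $\frac{7}{2}|V(T_i)|^{\log_3 2}$. Choosing $j$ with $\frac{7}{2}|V(T_j)|^{\log_3 2}(t-1)<|V(T_j)|$, the pigeonhole argument with Proposition~\ref{pro:factor} forces some $H_i$ to be a cycle longer than the longest cycle, a contradiction; so $T_j$ has no $(t-1)$-curve embedding. There is then a second step your write-up omits entirely but would also need: establishing (ii). The paper gets it by taking the minimal non-$(t-1)$-embeddable triangulation along the one-vertex-at-a-time construction sequence from $T_{m-1}$ to $T_m$, and showing that at the critical step the new vertex can be absorbed by one additional tiny clean curve, yielding a $t$-curve embedding of a graph with no $(t-1)$-curve embedding.
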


\begin{proof} 
  The heart of the proof is the existence of triangulations whose
  longest cycles are relatively small. Following Chen and
  Yu~\cite{chen2002}, let \(T_1,T_2,\ldots\) be the family of
  triangulations constructed as follows. First, \(T_1\) is the plane
  triangulation induced by  \(K_4\). Now, \(T_{i+1}\) is
  constructed from \(T_i\), for \(i=1,2,\ldots\), as follows: in each inner face of~\(T_i\),
  add one new vertex and join it to the vertices of~\(T_i\) incident
  with the face containing it. We refer the reader to
  Figure~\ref{fig:fig1}.

\begin{figure}[ht] 
\includegraphics[width=15.0cm]{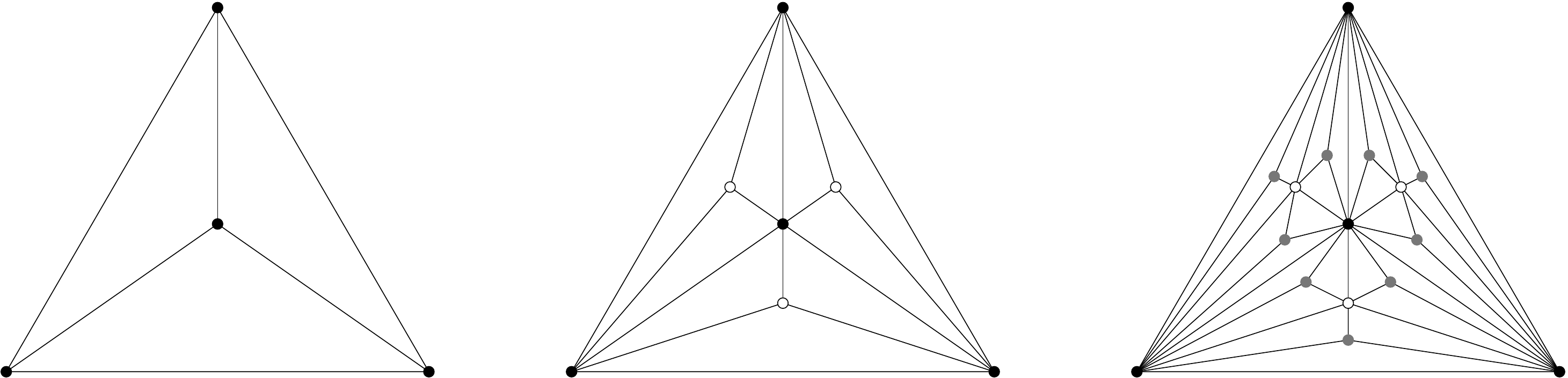}
\caption{On the left hand side we have the triangulation \(T_1\). For
  each inner face \(T\) of \(T_1\), we add a (white) vertex inside
  \(T\) and join it with edges to the three vertices incident to \(T\); the
  result is the middle triangulation \(T_2\). We obtain \(T_3\) (right
  hand side) similarly: for each inner face \(T\) of \(T_2\), we add
  a (grey) vertex inside \(T\), and join it with edges to the three
  vertices incident to \(T\).}
\label{fig:fig1}
\end{figure}

In~\cite{chen2002} it is proved that, for \(i \ge 1\), the length of the longest
cycle of \(T_i\) is less than \(\frac{7}{2}|V(T_i)|^{\log_3 2}\). Now let \(j\)
be an integer large enough such that
\(\frac{7}{2}|V(T_j)|^{\log_3 2}\cdot (t-1) < |V(T_j)|\). Toward a
contradiction, suppose that \(T_j\) has a \((t-1)\)-curve embedding, and let
\(\{H_1,\ldots,H_{t-1}\}\) be the subgraphs of \(T_j\) guaranteed by
Proposition~\ref{pro:factor}. Since \(V(T_j)=\bigcup_{i=1}^{t-1} V(H_i)\), it
follows that there is some \(H_i\) such that
\(|V(H_i)| > \frac{7}{2}|V(T_j)|^{\log_3 2}\). Since
\(\frac{7}{2}s^{\log_3 2} \ge 3\) for every \(s \ge 1\), it follows from
Proposition~\ref{pro:factor} that \(H_i\) must be a cycle, contradicting that
the length of the longest cycle of \(T_j\) is less than
\(\frac{7}{2}|V(T_j)|^{\log_3 2}\). Thus \(T_j\) has no \((t-1)\)-curve
embedding.

In the previous paragraph we have shown that the family of graphs which are no
\((t-1)\)-curve embeddable is not empty. Now we will choose the required graph
from such a family.  Let \(m\) be the least integer such that \(T_m\) has no
\((t-1)\)-curve embedding. Note that \(m\ge 3\), since \(T_2\) has a \(1\)-curve
embedding, and thus a \((t-1)\)-curve embedding for every \(t\ge 2\). By the
minimality of \(m\), \(T_{m-1}\) has a \((t-1)\)-curve embedding. Let
\(Q_1:=T_{m-1}, Q_2, \ldots, Q_k:=T_{m}\) be a sequence of triangulations
(subtriangulations of \(T_{m}\)) such that \(Q_{i+1}\) is obtained from \(Q_i\)
by adding a new vertex and its three incident edges, for
\(i \in \{1, \ldots, k-1\}\). Let \(\ell\) be the largest integer such that
\(Q_\ell\) is a \((t-1)\)-curve embedding. Let \(v\) be the vertex that gets
added (together with its three incident edges) to \(Q_\ell\), in order to get
\(Q_{\ell+1}\).

The maximality of \(\ell\) implies that \(Q_{\ell+1}\) is not a
\((t-1)\)-curve embedding, and we claim that \(Q_{\ell+1}\) is a
\(t\)-curve embedding. To see this, let \(x,y,z\) be the three vertices
adjacent to \(v\) in \(Q_{\ell+1}\). Thus \(x,y,z\) form a \(3\)-cycle,
which bounds the face \(f\) in \(Q_\ell\) in which \(v\) is placed. Let
\(\rho_1,\ldots,\rho_{t-1}\) be clean Jordan curves that witness the
\((t-1)\)-curve embeddability of \(Q_\ell\). It is easy to see that if
one of these Jordan curves intersects \(f\) then we can slightly perturb
it so that it also intersects \(v\). But this is impossible, since then
\(Q_{\ell+1}\) would be a \((t-1)\)-curve embedding. Thus none of
\(\rho_1,\ldots,\rho_{t-1}\) intersects \(v\) or its incident edges, and
so they are also clean Jordan curves in \(Q_{\ell+1}\). We now draw in a
small neighborhood of \(v\) a clean Jordan curve \(\rho_t\) that only
contains \(v\), so that \(\rho_1,\ldots,\rho_t\) is a
collection of pairwise disjoint clean Jordan curves that contain all the
vertices of \(Q_{\ell+1}\). Therefore \(Q_{\ell+1}\) is a \(t\)-curve
embedding, as claimed.

Let \(G_t\) be the underlying graph of the triangulation
\(Q_{\ell+1}\). It is readily checked that \(G_t\) is \(3\)-connected
and simple, and \(Q_{\ell+1}\) witnesses that \(G_t\) triangulates the
plane, and that \(G_t\) has a \(t\)-curve embedding. Since \(G_t\) is
\(3\)-connected it follows that \(Q_{\ell+1}\) is 
its unique embedding (up to isomorphism) in the plane. Since \(Q_{\ell+1}\) is not a
\((t-1)\)-curve embedding, it follows that \(G_t\) does not have a
\((t-1)\)-curve embedding.
\end{proof}


\section{Proof of Theorem~\ref{thm:maintheorem}}\label{sec:proofmain}

First we prove membership in NP, and then we prove NP-hardness.

\vglue 0.3 cm
\noindent{\sc (A)} {\sc \(t\)-curveCrossingNumber} is in NP.
\vglue 0.3 cm

\begin{proof}
  A drawing \(\dd\) of a graph \(G\) with at most \(k\) crossings may be
  described combinatorially, with an amount of information bounded by a
  polynomial function of \(|V(G)|+|E(G)|+k\), by giving the cellular
  structure of \(\dd\). Now a collection \(R\) of \(t\) clean Jordan
  curves with respect to \(\dd\) can be also described combinatorially, simply by
  regarding each of these curves as the edge set of a cycle that gets
  added to \(\dd\). (We remark that for this purpose we regard a graph
  that consists of a pair of vertices joined by two parallel edges, or
  of a vertex with a loop-edge, as a cycle.) We let \(\dd'\) denote the
  drawing that is obtained from \(\dd\) by adding (the edges of) these
  \(t\) cycles, which we colour blue to help comprehension.

  Thus the fact that \(G\) has a \(t\)-curve drawing with at most \(k\)
  crossings can be attested by the existence of such a drawing \(\dd'\),
  with the properties that the \(t\) blue cycles are pairwise disjoint,
  and each vertex of \(G\) is contained in a blue cycle. We finally note
  that such a drawing \(\dd'\) can be described combinatorially with an
  amount of information bounded by a polynomial function of
  \(|V(G)|+|E(G)|+k+t\), and that it can be verified in polynomial time
  that \(\dd'\) satisfies the required properties.
\end{proof}

\vglue 0.3 cm
\noindent{\sc (B)} {\sc \(t\)-curveCrossingNumber} is NP-hard.
\vglue 0.3 cm

\begin{proof}
  Let \(t \ge 2\) be fixed. Let \(G\) be a graph and let \(G'\) be the
  disjoint union of \(G\), a graph \(G_t\) that satisfies the conditions
  in Lemma~\ref{lem:tri}, and \(k\) disjoint copies of \(K_{3,3}\). It
  was proved in~\cite{chung1987} that testing if a graph has a
  \(2\)-page embedding is NP-complete. Since the size of \(G'\) is
  bounded by a polynomial function of \(|V(G)|+|E(G)|+k\) (the size of
  \(G_t\) is a constant, for each fixed \(t\)), it follows that to prove
  {\sc (B)} it suffices to show that \(G\) has a \(2\)-page embedding if
  and only if \(G'\) has a \(t\)-curve drawing with at most \(k\)
  crossings.

  Suppose that \(G\) has pagenumber \(2\). Let \(\ee\) be a \(t\)-curve
  embedding of \(G_t\). Let \(\rho\) be one of the \(t\) clean Jordan
  curves that witness that \(\ee\) is a \(t\)-curve embedding, and let
  \(p\) be a point on \(\rho\) that is not a vertex of $G$. Let \(\delta\) be a
  disk with center \(p\), small enough so that \(\delta\) does not
  intersect any vertex or edge of \(G_t\). Then we can embed \(G\) in
  the interior of \(\delta\), with the vertices lying on
  \(\rho\cap \delta\). Moreover, since the \(2\)-page crossing number of
  \(K_{3,3}\) is \(1\), it follows that we can also draw the
  \(k\) copies of \(K_{3,3}\) in the interior of $\delta$, with one crossing per copy, so that all
  the vertices lie on \(\rho\cap\delta\). This yields a \(t\)-curve
  drawing of~\(G'\) with at most (actually, exactly) \(k\) crossings.
 
  For the other direction, suppose that \(\dd'\) is a \(t\)-curve
  drawing of \(G'\) with at most \(k\) crossings. We note that each copy
  of \(K_{3,3}\) must contribute with exactly \(1\) crossing. Thus it
  follows that if we let \(G''\) denote the disjoint union \(G\) and
  \(G_t\), then the restriction \(\ee''\) of \(\dd'\) to \(G''\) is a
  \(t\)-curve embedding.

  Let \(R:=\{\rho_1,\ldots,\rho_t\}\) be a set of clean Jordan
  curves that witness that \(\ee''\) is a \(t\)-curve embedding. We let
  $\ee_t$ denote the restriction of $\ee''$ to $G_t$. Then obviously the
  collection $R$ witnesses that $\ee_t$ is a $t$-curve embedding.

\vglue 0.3 cm
\noindent{\sc Claim.} {\sl Let $f$ be any face of $\ee_t$. Then there is
  at most one curve in $R$ that intersects $f$.}
\vglue 0.3 cm

\begin{proof}
  Let $f$ be any face of $\ee_t$. By Lemma~\ref{lem:tri}, every face in
  an embedding of \(G_t\) is a triangle, and so \(f\) is bounded by a
  \(3\)-cycle \(C\). Let \(u,v,w\) be the vertices of \(C\).

  To prove the claim, first note that at most three curves in \(R\) can
  intersect \(f\); this follows simply because \(C\) has exactly three
  vertices, and the curves in \(R\) are pairwise disjoint and clean with
  respect to \(\ee_t\). Suppose that exactly two curves
  \(\rho_i,\rho_j\) in \(R\) intersect \(f\). Since the curves in \(R\)
  are pairwise disjoint, then it is not possible that each of \(\rho_i\)
  and \(\rho_j\) intersects two vertices of \(C\). Thus at least one of
  these curves, say \(\rho_i\), must be a loop based on a vertex of
  \(C\), say \(u\). The other curve \(\rho_j\) either contains both
  \(v\) and \(w\), or exactly one of them. Suppose first that \(\rho_j\)
  contains both \(v\) and \(w\). Thus the scenario is as depicted on the
  left side of Figure~\ref{fig:fig2}.

\begin{figure}[ht!]
\centering
\scalebox{1.0}{\input{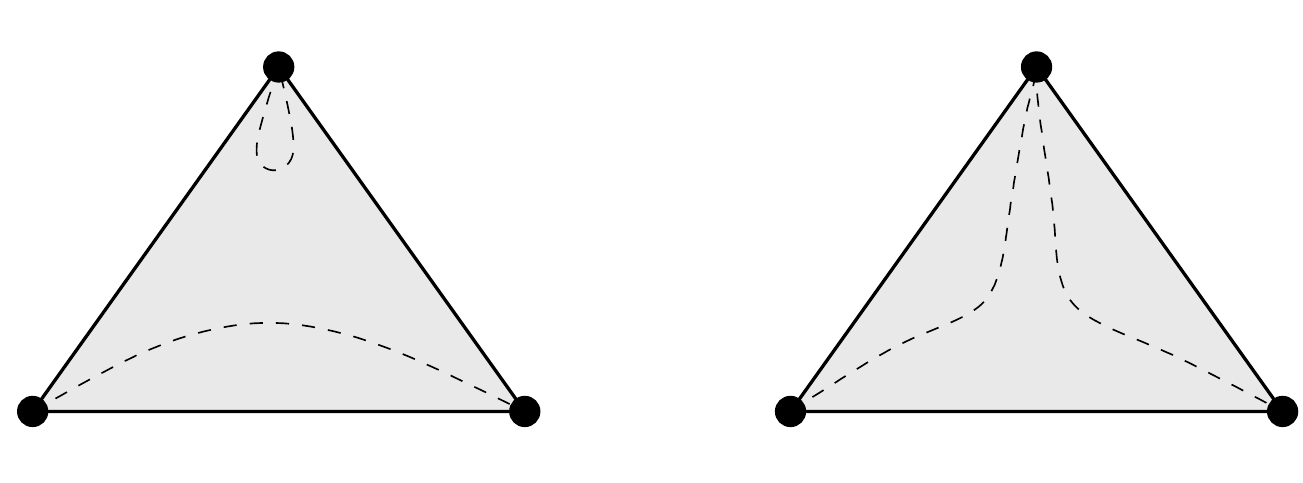_t}}
\caption{The curve \(\rho_j\) intersects the (shaded) face $f$, and
  contains $v$ and $w$. Since the clean curve $\rho_i$ contains $u$,
  then $\rho_i\setminus\{u\}$ must be contained in $f$. In this case we
  can replace these two curves by a single curve \(\rho_j'\) that
  contains \(u,v\), and \(w\), as shown on the right-hand side.}
\label{fig:fig2}
\end{figure}

\begin{figure}[ht!]
\centering
\scalebox{1.0}{\input{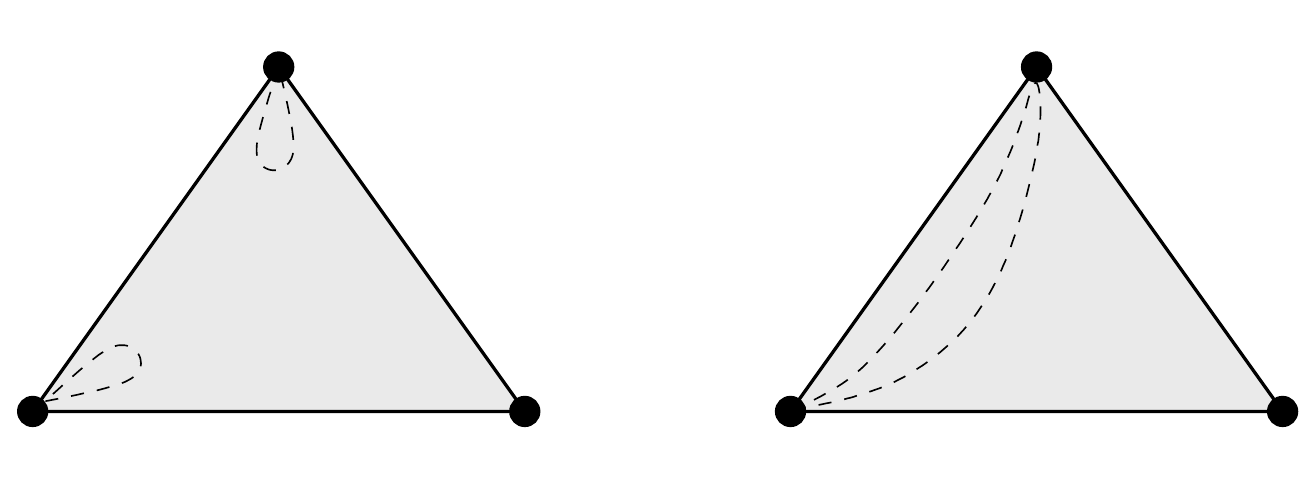_t}}
\caption{The curve $\rho_i$ contains $u$, and is otherwise contained in
  $f$. The curve $\rho_j$ contains $v$, and is otherwise contained in
  $f$. In this case \(\rho_j\) can be re-routed inside \(f\), as
  illustrated on the right-hand side, so that the result is a clean
  Jordan curve \(\rho_j'\) that contains both \(u\) and $v$.}
\label{fig:fig3}
\end{figure}

We can then remove \(\rho_i\), and reroute the part of \(\rho_j\) inside
\(f\), so that the resulting curve \(\rho_j'\) contains \(v,u\), and
\(w\), as illustrated on the right side of
Figure~\ref{fig:fig2}. Hence
\((R\setminus\{\rho_i,\rho_j\})\cup \rho_j'\) is a set of \(t-1\)
pairwise disjoint clean Jordan curves whose union contains all the
vertices of \(G_t\). Therefore \(G_t\) has a \((t-1)\)-curve embedding,
contradicting (iii) in Lemma~\ref{lem:tri}. Now, if \(\rho_j\) contains
exactly one of \(v\) and \(w\) (say \(v\), without loss of generality),
then the scenario is as shown on the left side of
Figure~\ref{fig:fig3}. In this case we can replace \(\rho_i\) and
\(\rho_j\) by a curve \(\rho_j'\) that contains both \(u\) and \(v\) (as
in the right side of Figure~\ref{fig:fig3}). Thus
\((R\setminus\{\rho_i,\rho_j\})\cup \rho_j'\) is a set of \(t-1\)
pairwise disjoint clean Jordan curves whose union contains all the
vertices of \(G_t\), again contradicting (iii) in Lemma~\ref{lem:tri}.

\begin{figure}[ht!]
\centering
\scalebox{1.0}{\input{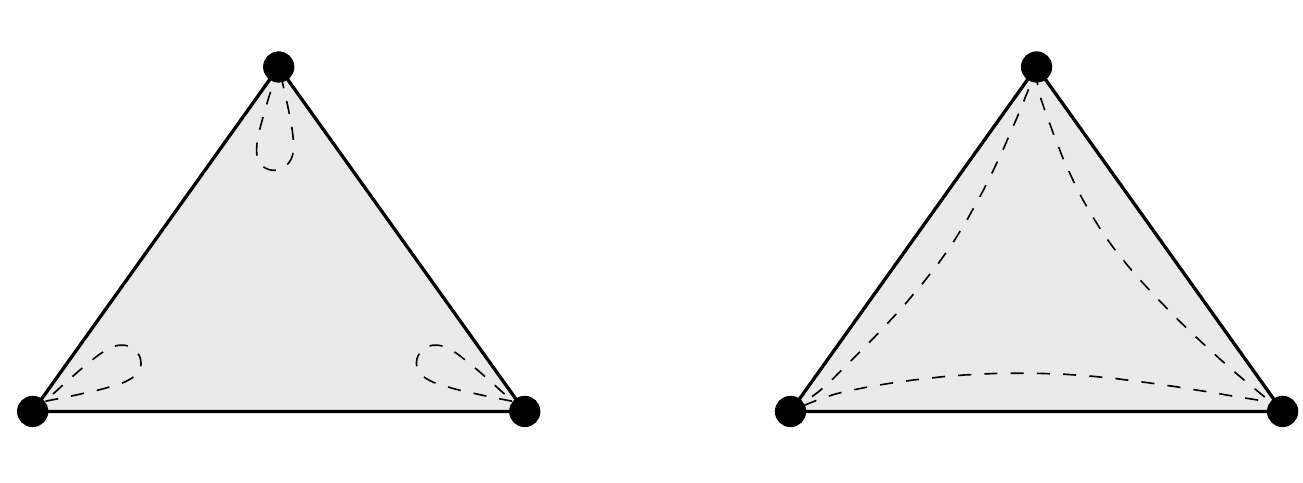_t}}
\caption{If the clean Jordan curves $\rho_i, \rho_j, \rho_\ell$ contain
  $u,v$, and $w$, respectively, and each of these curves intersects $f$,
  then $\rho_i\setminus\{u\}, \rho_j\setminus\{v\}$, and
  $\rho_\ell\setminus\{w\}$ are contained in $f$, as shown in the
  left-hand side figure. These three curves can then be replaced by a
  single curve \(\rho\) that contains $u,v$, and $w$.}
\label{fig:fig4}
\end{figure}

In the remaining case, exactly three curves \(\rho_i,\rho_j,\rho_\ell\)
intersect \(f\). In this case each of these curves must contain exactly
one of \(u,v\), and \(w\), as illustrated on the left side of
Figure~\ref{fig:fig4}. We can then replace these three curves by a curve
\(\rho\) contained in \(f\), as shown on the right side of
Figure~\ref{fig:fig4}. Thus
\((R\setminus\{\rho_i,\rho_j,\rho_\ell\})\cup \rho\) is a set of \(t-2\)
pairwise disjoint clean Jordan curves whose union contains all the
vertices of \(G_t\). Hence \(G_t\) has a \((t-2)\)-curve embedding, (and
therefore, a \((t-1)\)-curve embedding), 
contradicting (iii) in Lemma~\ref{lem:tri}.
\end{proof}

Since $G$ and $G_t$ are disjoint, it follows that there is a face $f$ of
$\ee_t$ such that, in $\ee''$, $G$ is drawn inside $f$. Thus it follows
that some curve in \(R\) must intersect \(f\).

From the Claim, there is exactly one curve $\rho_m$ in \(R\) that
intersects \(f\). Since \(G\) is contained in \(f\), it follows that all
the vertices of \(G\) are contained in \(\rho_m\). Since \(\rho_m\) is
clean in \(\ee''\), it follows that \(\rho_m\) does not intersect any
edge of \(G\). Thus \(\rho_m\) witnesses that the restriction of $\ee''$
to \(G\) is a \(1\)-curve embedding. Hence we are done, since \(G\) has
a \(1\)-curve embedding if and only if it has a \(2\)-page embedding.
\end{proof}


\section{Concluding remarks}\label{sec:concludingremarks}

It follows from the proof of Theorem~\ref{thm:maintheorem} that, for
each fixed \(t\ge 2\), even the problem of deciding whether a given
graph admits a \(t\)-curve embedding, is already NP-complete. As we have
observed, this is also true for \(t=1\), as testing if a graph has
pagenumber \(2\) (which is equivalent to testing if it has a \(1\)-curve
embedding) is NP-complete.

We recall that a {\em \(p\)-page book} consists of \(p\) halfplanes (the
{\em pages}) whose boundaries lie on a common line ({\em the spine}). In
a {\em \(p\)-page drawing}, all the vertices lie on the spine, and each
edge (except for its endpoints) lies on a single
page~\cite{bernhart1979}. The \(p\)-{\em page crossing number}
\(\bk{p}{G}\) of a graph \(G\) is the minimum number of crossings in a
\(p\)-page drawing of \(G\)~\cite{sha}.

In the Book Crossing Number entry in~\cite{schaefer2014}, Schaefer
mentions that testing if a graph \(G\) satisfies \(\bk{p}{G}=0\) is
NP-complete, for every integer \(p\ge 2,p \neq 3\) (the case \(p=3\)
remains open). We note that analogous arguments to those we used in part
(B) of the proof of Theorem~\ref{thm:maintheorem} can be used to prove
the following.

\begin{observation}\label{obs:book}
  Let \(p\) be a fixed integer such that \(p\ge 2\) and \(p\neq
  3\). Then the decision problem ``given a graph \(G\) and an integer
  \(k\), is \(\bk{p}{G}\le k\)?'' is NP-complete.
\end{observation}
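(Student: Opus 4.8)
The plan is to mirror the proof of Theorem~\ref{thm:maintheorem}, but the argument turns out to be substantially shorter: because a \(p\)-page drawing uses a single spine, there is no analogue of the vertex-distribution difficulty that forced us to build the triangulations \(G_t\) of Lemma~\ref{lem:tri}, and one can reduce directly from \(p\)-page embeddability.

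Membership in NP would be handled exactly as in part~(A). Up to its number of crossings, a \(p\)-page drawing of \(G\) is encoded by a linear order of \(V(G)\) (the order along the spine) together with a map \(\phi\colon E(G)\to\{1,\dots,p\}\) telling which page carries each edge, because two edges lying on a common page cross if and only if their endpoints interleave in the order. From this data the number of crossings --- the total number of interleaving pairs inside each of \(\phi^{-1}(1),\dots,\phi^{-1}(p)\) --- is computable in polynomial time, so \(\bk{p}{G}\le k\) is witnessed by a certificate of size polynomial in \(|V(G)|+|E(G)|\) that can be verified in polynomial time.

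For NP-hardness I would reduce from the problem ``is \(\bk{p}{G}=0\)?'', i.e.\ ``does \(G\) have pagenumber at most \(p\)?'', which (as recalled above from Schaefer's survey) is NP-complete for each fixed \(p\ge 2\), \(p\neq 3\). The only ingredient needed is that \(\bk{p}{\cdot}\) is additive over disjoint unions, \(\bk{p}{A\sqcup B}=\bk{p}{A}+\bk{p}{B}\) for vertex-disjoint \(A,B\): for ``\(\le\)'' one concatenates optimal page layouts of \(A\) and of \(B\) along the spine, so that the edges of \(A\) and those of \(B\) occupy disjoint spine-intervals and hence never cross each other; for ``\(\ge\)'' one restricts any \(p\)-page drawing of \(A\sqcup B\) to \(A\) and to \(B\), obtaining \(p\)-page drawings of \(A\) and of \(B\) whose crossings form disjoint subsets of the crossings of the original drawing. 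I would then fix a graph \(H_p\) of pagenumber exactly \(p+1\) --- for instance \(H_p=K_{2p+1}\), which has pagenumber \(\lceil(2p+1)/2\rceil=p+1\)~\cite{bernhart1979} --- set \(c_p:=\bk{p}{H_p}\), which is a positive constant for fixed \(p\) because \(H_p\) is not \(p\)-page embeddable, and map \(G\) to the disjoint union \(G'\) of \(G\) with \(k\) copies of \(H_p\). By additivity \(\bk{p}{G'}=\bk{p}{G}+k\,c_p\), so \(\bk{p}{G'}\le k\,c_p\) exactly when \(\bk{p}{G}=0\); and \(|V(G')|+|E(G')|\) is polynomial in \(|V(G)|+|E(G)|+k\) since \(H_p\) has bounded size. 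This is the desired reduction. (One could even take \(k=0\), making the reduction \(G\mapsto(G,0)\); the copies of \(H_p\) are there only to exhibit hardness for arbitrarily large crossing thresholds, echoing the \(k\) copies of \(K_{3,3}\) in part~(B).)

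I do not expect a genuine obstacle here. The one place meriting care is the additivity of \(\bk{p}{\cdot}\) over components, which is routine once one notices that packing the components into disjoint blocks along the spine prevents edges of distinct components from interleaving. In contrast with Theorem~\ref{thm:maintheorem}, there is no need for a minimal triangulation or for any surgery on clean Jordan curves, since the single spine of a book leaves no room for the curve rerouting carried out in the Claim within part~(B).
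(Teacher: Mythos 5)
Your proof is correct and is essentially the fleshed-out version of what the paper intends, since the paper only remarks that ``analogous arguments to part (B)'' establish this observation: you reduce from $p$-page embeddability (NP-complete for fixed $p\ge 2$, $p\neq 3$) by taking a disjoint union with crossing-forcing gadgets, and your NP-membership certificate (spine order plus page assignment, with crossings counted as interleaving pairs per page) is sound. Your simplifications---dropping any analogue of the triangulation gadget \(G_t\), invoking additivity of \(\bk{p}{\cdot}\) over disjoint unions in place of the clean-curve surgery, and observing that the trivial reduction \(G\mapsto(G,0)\) already yields hardness---are all valid, precisely because a book has a single spine and no vertex-distribution issue arises.
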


It is reasonable to argue that, alternatively to the definition of a
\(t\)-circle drawing, we could obtain a generalization of the definition
of a cylindrical drawing by asking that the vertices are contained in
\(t>2\) clean concentric circles. To illustrate an issue with such a
definition, let us consider drawings of the complete graph in which the
vertices are placed on three clean concentric circles. Then there cannot
be a vertex in the inner circle and a vertex in the outer circle, as
then an edge joining these two vertices would necessarily cross the
middle circle. Thus either all the vertices must lie in the union of the
middle circle and the outer circle, or in the union of the middle circle
and the inner circle. That is, any such drawing of the complete graph is
necessarily cylindrical. Thus, for general graphs, such an alternative
definition of a \(t\)-circle drawing is not really more general than the
definition of a cylindrical embedding. On the other hand, if one
slightly relaxes the condition that the interior of no edge intersects a
circle, then we arrive to the radial crossing
number~\cites{radial1,radial2,schaefer2014} (see also the related notion
of the cyclic level crossing number~\cite{cyc}).


\begin{bibdiv}
\begin{biblist}

\bib{abrego2014}{article}{ 
  author={{\'A}brego, Bernardo M.}, 
  author={Aichholzer, Oswin}, 
  author={Fern{\'a}ndez-Merchant, Silvia}, 
  author={Ramos, Pedro},
  author={Salazar, Gelasio}, 
  title={Shellable drawings and the cylindrical crossing number of \(K_n\)}, 
  journal={Discrete Comput. Geom.}, 
  volume={52},
  date={2014}, 
  number={4}, 
  pages={743--753}, 
}

\bib{radial1}{article}{
   author={Bachmaier, Christian},
   title={A radial adaptation of the Sugiyama framework for visualizing
hierarchical information},
   journal={IEEE Trans. Vis. Comput. Graph},
   volume={13},
   date={2007},
   number={3},
   pages={583--594},
}

\bib{cyc}{article}{
   author={Bachmaier, Christian},
   author={Brandenburg, Franz J.},
   author={Brunner, Wolfgang},
   author={H\"ubner, Ferdinand},
   title={Global \(k\)-level crossing reduction},
   journal={J. Graph Algorithms Appl.},
   volume={15},
   date={2011},
   number={5},
   pages={631--659},
}

\bib{earlyhistory}{article}{
   author={Beineke, Lowell},
   author={Wilson, Robin},
   title={The early history of the brick factory problem},
   journal={Math. Intelligencer},
   volume={32},
   date={2010},
   number={2},
   pages={41--48},
}

\bib{bernhart1979}{article}{ 
  author={Bernhart, Frank}, 
  author={Kainen, Paul C.},
  title={The book thickness of a graph}, 
  journal={J. Combin. Theory Ser. B},
  volume={27}, 
  date={1979}, 
  number={3}, 
  pages={320--331}, 
}

\bib{chen2002}{article}{ 
  author={Chen, Guantao}, 
  author={Yu, Xingxing},
  title={Long cycles in 3-connected graphs}, 
  journal={J. Combin. Theory Ser. B},
  volume={86}, 
  date={2002}, 
  number={1}, 
  pages={80--99}, 
}

\bib{chung1987}{article}{ 
  author={Chung, Fan R. K.}, 
  author={Leighton, Frank Thomson}, 
  author={Rosenberg, Arnold L.}, 
  title={Embedding graphs in books: a layout problem with applications to VLSI design},
  journal={SIAM J. Algebraic Discrete Methods}, 
  volume={8}, 
  date={1987}, 
  number={1}, 
  pages={33--58},
}

\bib{hh}{article}{
   author={Harary, Frank},
   author={Hill, Anthony},
   title={On the number of crossings in a complete graph},
   journal={Proc. Edinburgh Math. Soc. (2)},
   volume={13},
   date={1962/1963},
   pages={333--338},
}

\bib{radial2}{article}{
	author={Northway, Mary L.},
	title={A method for depicting social relationships obtained by sociometric testing},
	journal={Sociometry},
	volume={3},
	date={1940},
	number={2},
	pages={144--150},
}

\bib{schaefer2014}{article}{ 
  author={Schaefer, Marcus}, 
  title={The graph crossing number and its variants: A survey}, 
  journal={Electron. J. Combin.}, 
  date={2014}, 
  pages={Dynamic Survey 21, 100}, 
}

\bib{sha}{article}{
   author={Shahrokhi, Farhad},
   author={Sz\'ekely, L\'aszl\'o A.},
   author={S\'ykora, Ondrej},
   author={Vr\v to, Imrich},
   title={The book crossing number of a graph},
   journal={J. Graph Theory},
   volume={21},
   date={1996},
   number={4},
   pages={413--424},
}

\end{biblist}
\end{bibdiv}

\end{document}